\def\qed{\hfill {\hbox{${\vcenter{\vbox{               
   \hrule height 0.4pt\hbox{\vrule width 0.4pt height 6pt
   \kern5pt\vrule width 0.4pt}\hrule height 0.4pt}}}$}}}
\def\tr{\triangleright}
\newtheorem{theorem}{Theorem}
\newtheorem{definition}{Definition}
\newtheorem{lemma}[theorem]{Lemma}
\newtheorem{proposition}[theorem]{Proposition}
\newtheorem{corollary}[theorem]{Corollary}
\newtheorem{example}{Example}
\newtheorem{remark}[example]{Remark}
\newenvironment{proof}[1][Proof]{\smallskip\noindent{\bf #1.}\quad}%
{\qed\par\medskip}
\date{}
\title{\Large \textbf{On rack polynomials}}
\author{Tim Carrell \and Sam Nelson} 
\begin{document}
\maketitle

\begin{abstract}
We study rack polynomials and the link invariants they define. We
show that constant action racks are classified by their generalized
rack polynomials and show that $ns^at^a$-quandles are not classified 
by their generalized quandle polynomials. We use subrack polynomials 
to define enhanced rack counting invariants, generalizing the quandle 
polynomial invariants.
\end{abstract}

\textsc{Keywords:} Finite racks, rack polynomials,
knot and link invariants

\textsc{2000 MSC:} 57M25, 57M27, 17D99

\section{\large \textbf{Introduction}}

In \cite{N}, a two-variable polynomial invariant of finite quandles
was introduced. This polynomial quantifies the way in which
the trivial action of one quandle element on another is distributed
throughout the quandle as opposed to concentrated in a single identity
element as in a group. 

In \cite{N2} the quandle polynomial was generalized to a family of
$N^2$ polynomials where $N$ is the least common multiple of the exponents
of the columns of the quandle matrix considered as elements of of the
symmetric group $S_n$ on the elements of the quandle.
In both cases, the quandle polynomials were used to enhance the quandle
counting invariants to obtain new invariants which specialize to the
original quandle counting invariants but contain more information.

In this paper we study the natural generalization of quandle polynomials
to finite racks. We are able to show that for at least one class of
finite racks, the generalized rack polynomials determine the rack
structure up to isomorphism, and we identify another class of quandles 
for which the generalized rack polynomials do not determine the isomorphism
class. We then use these rack polynomials to enhance the rack counting
invariants from \cite{N3}.

The paper is organized as follows. In section \ref{rrp} we review the
definitions of racks and rack polynomials and give some examples. In
section \ref{grpcar} we show that constant action racks are classified 
by their generalized rack polynomials. In section \ref{nsata} we show 
that Alexander quandles have quandle polynomial of the form $ns^at^a$ and 
that unlike constant action racks, Alexander quandles and other quandles 
with quandle polynomial $ns^at^a$ are not classified by their generalized 
quandle polynomials. In section \ref{rpinv} we define rack polynomial 
enhanced counting invariants. In section \ref{quest} we collect some 
questions for future investigation.

\section{\large \textbf{Racks and rack polynomials}} \label{rrp}

In \cite{J}, Joyce defined a kind of self-distributive algebraic
structure which he dubbed a ``quandle.'' In \cite{FR}, quandles
were generalized to a larger class of self-distributive algebraic
systems known as ``racks.'' Both concepts appear under other names
in the literature such as ``distributive groupoids,'' 
``automorphic sets'' and ``kei.'' See \cite{M,B,T}.

\begin{definition}
\textup{A \textit{rack} is a set $X$ with a binary operation 
$\tr:X\times X\to X$ satisfying
\begin{list}{}{}
\item[(i)]{for all $x,y\in X$ there is a unique $z\in X$ satisfying
$x=z\tr y$, and}
\item[(ii)]{for all $x,y,z\in X$ we have $(x\tr y)\tr z=(x\tr z)\tr(y\tr z)$.}
\end{list} 
A rack which additionally satisfies
\begin{list}{}{}
\item[(0)]{for all $x\in X$, we have $x\tr x = x$}
\end{list}
is a \textit{quandle}.}
\end{definition}

Axiom (i) requires that each element $x$ of a rack $X$ acts on $X$ bijectively,
while axiom (ii) requires these bijections to be automorphisms
of the rack structure. The bijectivity of the action of $x$ gives us a right
inverse action $\tr^{-1}:X\times X\to X$ defined by $x\tr^{-1} y= z$ where
$x=z\tr y$. The reader can check that $(X,\tr^{-1})$ is also a rack, called
the \textit{dual} of $(X,\tr).$

Standard examples of rack and quandle structures include:
\begin{list}{$\bullet$}{}
\item{any union of conjugacy classes in a group $G$ with operation
$a\tr b = b^{-n}ab^n$, $n\in \mathbb{Z}$}
\item{the set of right cosets in a group $G$ of a subgroup 
$H\subset G$ (not necessarily normal) fixed by an automorphism
$s:G\to G$ with rack operation $Hx\tr Hy= s(HxHy^{-1})Hy$ }
\item{any set $X$ with a permutation $\sigma\in S_{X}$ with 
$x\tr y = \sigma(x)$ (these are \textit{constant action racks} or 
\textit{permutation racks})}
\item{the subset of a vector space $V$ on which a bilinear form
$\langle \mathbf{x},\mathbf{x}\rangle\ne 0$ with 
\[\mathbf{x}\tr\mathbf{y}=\alpha\left(\mathbf{x} 
-2\frac{\langle\mathbf{x},\mathbf{y}\rangle}{\langle\mathbf{y},
\mathbf{y}\rangle}\mathbf{y}\right)\]
where $\alpha$ is a non-zero scalar
(these are called \textit{Coxeter racks}; see \cite{FR, NW})}
\item{any module over $\mathbb{Z}[t^{\pm 1},s]/s(1-t-s)$ with $x\tr y= tx+sy$.}
\end{list}

Racks of the last type in which $s=1-t$ are known as \textit{Alexander 
quandles}. A rack is \textit{abelian} if for all $x,y,z,w\in X$ we have
\[(x\tr y)\tr(z\tr w) =(x\tr z)\tr(y\tr w).\]
In addition to being right-distributive, abelian quandles are also 
left-distributive, since we have
\[a\tr(b\tr c)= (a\tr a)\tr (b\tr c) = (a\tr b)\tr (a\tr c).\] 

A quandle is a \textit{crossed set} (see \cite{AG}) if we have
\[x\tr y = x \iff y\tr x = y.\]
The reader can check that Alexander quandles are abelian 
and Coxeter quandles (set $\alpha=-1$) are crossed sets.

We can express rack structures on a finite set $X=\{x_1,\dots, x_n\}$
in an algebra-agnostic way, i.e. without needing a formula for
$x\tr y$, by giving the rack operation table as a matrix $M_X$ whose
$(i,j)$ entry is $k$ where $x_k=x_i\tr x_j$. We call this the
\textit{rack matrix} of $X$.

\begin{example}
\textup{The constant action rack on $X=\{1,2,3\}$ with $\sigma=(132)$
has rack matrix}
\[M_X=\left[\begin{array}{ccc} 
3 & 3 & 3 \\
1 & 1 & 1 \\
2 & 2 & 2 \\
\end{array}\right].\]
\end{example}

For a rack $X$, say that an equivalence relation $\sim$ on $X$ is a
\textit{congruence} if $x\sim x'$ and $y\sim y'$ imply $x\tr y\sim x'\tr y'$.
The set $X/\sim$ of equivalence classes then forms a \textit{quotient rack}
under the operation $[x\tr y]=[x]\tr[y]$. See \cite{R} for more.

Next, we have a definition from \cite{N2}:
\begin{definition}
\textup{Let $X$ be a finite rack. For each $x\in X$, define}
\[C_m(x) = \{ y\in X \ |\ y\tr^{m} x = y\}\quad 
\mathrm{and} \quad R_n(x) = \{ y \in X\ |\ x\tr^{n} y = x\}\]
\textup{where}
\[x \tr^i y = (\dots (x\tr y) \tr y) \dots \tr y\]
\textup{where $i$ is the number of triangles. Denote $c_m(x)=|C_m(x)|$
and $r_n(x)=|R_n(x)|$. Then the $(m,n)$--\textit{rack polynomial} of $X$ 
(or $(m,n)$--\textit{quandle polynomial} if $X$ is a quandle) is}
\[rp_{m,n}(X)=\sum_{x\in X}s^{c_m(x)}t^{r_n(x)}.\]
\textup{The terms ``rack polynomial'' and ``quandle polynomial'' without
specified $m$ and $n$ values will refer to the case $m=n=1$.}
\end{definition}

\begin{example}
\textup{The constant action rack $X$ with rack matrix 
$M_X=\left[\begin{array}{ccc} 
2 & 2 & 2 \\
1 & 1 & 1 \\
3 & 3 & 3 \\
\end{array}\right]$
has rack polynomial $rp_{1,1}(X)=2t+s^3t$.}
\end{example}

\begin{remark}
\textup{In \cite{N} example 8, it is incorrectly stated that a rack
may have rack polynomial equal to zero, since in that example we have a 
contribution of $s^0t^0$ from each element. Of course, $s^0t^0=1\ne 0$, and
indeed the coefficients of a rack polynomial always sum to the cardinality 
$|X|$. The second listed author is grateful to the first for catching this
oversight.}
\end{remark}

\section{\large \textbf{Generalized rack polynomials of constant action racks}}
\label{grpcar}

In this section we show that constant action racks are classified by their
generalized rack polynomials.

\begin{proposition}\label{GenRPofConstAct}
Let $X$ be the constant action rack of a given permutation $\sigma$ on 
$\{ x_1, x_2, \dotsc , x_k\}$. Then the generalized rack polynomial of $X$ is
\[ rp_{m,n}(X) = bs^kt^a + (k-b)t^a \]
where $a$ is the number of $x_i$ such that $\sigma^n(x_i) = x_i$ and $b$ is 
the number of $x_i$ such that $\sigma^m(x_i) = x_i$.
\end{proposition}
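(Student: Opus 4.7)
The plan is a direct computation exploiting the fact that in a constant action rack the operation $x \tr y = \sigma(x)$ does not depend on the second argument. A one-line induction from this formula gives the identity $x \tr^i y = \sigma^i(x)$, valid for every $i \geq 1$ and every $y \in X$, and this is essentially the only tool I will need.

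Using this identity, the two defining conditions $y \tr^m x = y$ and $x \tr^n y = x$ simplify to $\sigma^m(y) = y$ and $\sigma^n(x) = x$ respectively. Each reduced condition involves only one of the two variables, so one of the sets $C_m(x), R_n(x)$ is independent of $x$ altogether (namely, equal to the fixed-point set of the relevant iterate of $\sigma$, of cardinality $a$ or $b$), while the other is either all of $X$ or empty depending on whether $x$ itself is fixed by the relevant iterate of $\sigma$; by definition there are $b$ elements in the ``fixed'' case for $\sigma^m$ and $a$ elements in the ``fixed'' case for $\sigma^n$.

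The final step is just to sum the monomials $s^{c_m(x)} t^{r_n(x)}$ over $x \in X$ and collect terms. Grouping elements according to the dichotomy produced by the single-variable condition on $x$ splits the sum into exactly two blocks: one block of size $b$ for which the $s^k$ factor appears, and a complementary block of size $k - b$ for which it does not, each accompanied by a constant $t^a$ factor arising from the $x$-independent set. Matching exponents then yields $bs^k t^a + (k - b)t^a$ as claimed.

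There is no real obstacle: the entire argument is forced by the observation that iterated action in a constant action rack depends only on the left-hand argument, which collapses both defining conditions to one-variable statements about powers of $\sigma$. The only care required is in bookkeeping, i.e.\ tracking which of the two exponents receives the $k$-versus-$0$ dichotomy and which receives the constant fixed-point count.
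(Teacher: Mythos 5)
Your argument is correct and essentially identical to the paper's own proof: both use $x\tr^i y=\sigma^i(x)$ to turn the set $\{y\mid y\tr^m x=y\}$ into the fixed-point set of $\sigma^m$ (independent of $x$) and the set $\{y\mid x\tr^n y=x\}$ into an all-of-$X$-or-empty set, then sum the monomials over the resulting dichotomy. Note only that your final bookkeeping (constant factor $t^a$, with the $k$-versus-$0$ dichotomy in the $s$-exponent) agrees with the proposition and with the paper's own proof, which implicitly reads the polynomial as $\sum_{x}s^{r_m(x)}t^{c_n(x)}$; taking the earlier definition $\sum_{x}s^{c_m(x)}t^{r_n(x)}$ literally would instead give $as^{b}t^{k}+(k-a)s^{b}$, an inconsistency internal to the paper rather than a gap in your argument.
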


\begin{proof}
For any $x\in \{ x_1, x_2, \dotsc , x_k\}$, 
\[
c_n(x) = \lvert\{ y \ \vert\  y\tr^n x = y \}\rvert 
= \lvert\{ y \ \vert\  \sigma^n(y)=y \}\rvert = a
\]
so each term of $rp_{m,n}(X)$ contains a factor of $t^a$. 
Furthermore,
\[ r_m(x) = \lvert\{ y\ \vert\  x\triangleright^m y = x\}\rvert 
= \lvert\{ y\ \vert\ \sigma^m(x) = x\}\rvert  =
\begin{cases}
0 & \text{if $\sigma^m(x)\neq x$,} \\
k & \text{if $\sigma^m(x)= x$.}
\end{cases}
\]
Therefore, each $x$ such that $\sigma^m(x) = x$ corresponds to a term of 
$s^kt^a$, and each $x$ such that $\sigma^m(x) \neq x$ corresponds to a term 
of $t^a$. There are $b$ distinct $x$ such that $\sigma^m(x)=x$, and thus 
$k-b$ distinct $x$ such that $\sigma^m(x)\neq x$. Therefore,
\[
rp_{m,n}(R) = bs^kt^a + (k-b)t^a.
\]
\end{proof}

Note that the original ``non-generalized''
rack polynomial corresponds to $rp_{1,1}(X)$. For this specific case, we 
have $a$ and $b$ both equal to the number of fixed points of $\sigma$, and 
so we have

\begin{corollary}\label{RPofConstAct}
Let $X$ be the constant action rack of a given permutation $\sigma$ on 
$\{ x_1, x_2, \dotsc , x_k\}$, and suppose $\sigma$ has $b$ fixed points. 
Then the rack polynomial of $X$ is
\[
rp(X) = bs^kt^b + (k-b)t^b.
\]
\end{corollary}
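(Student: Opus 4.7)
The plan is to derive this corollary as a direct specialization of Proposition \ref{GenRPofConstAct} to the case $m=n=1$, so the work is essentially bookkeeping rather than a fresh calculation. First I would recall the formula established in the proposition, namely
\[ rp_{m,n}(X) = bs^kt^a + (k-b)t^a, \]
where $a = |\{x_i : \sigma^n(x_i)=x_i\}|$ and $b = |\{x_i : \sigma^m(x_i)=x_i\}|$, and note that by the definition given after the proposition, the (unindexed) rack polynomial $rp(X)$ is by convention $rp_{1,1}(X)$.

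Next I would observe that with $m=n=1$, both of the defining conditions $\sigma^n(x_i)=x_i$ and $\sigma^m(x_i)=x_i$ collapse to $\sigma(x_i)=x_i$; that is, both $a$ and $b$ count exactly the fixed points of $\sigma$. Since the corollary's hypothesis names this common cardinality $b$, we have $a=b$. Substituting into the proposition's formula yields
\[ rp(X) = rp_{1,1}(X) = bs^kt^b + (k-b)t^b, \]
which is precisely the claimed identity.

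I do not expect any serious obstacle here: the only step that could be mishandled is the convention that $rp(X)$ denotes $rp_{1,1}(X)$, which is explicitly stated in the definition of the rack polynomial, so once that is invoked the corollary follows immediately from Proposition \ref{GenRPofConstAct} without further argument.
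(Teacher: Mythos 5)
Your proposal is correct and matches the paper exactly: the authors also obtain the corollary by specializing Proposition \ref{GenRPofConstAct} to $m=n=1$, noting that $a$ and $b$ then both count the fixed points of $\sigma$. Nothing further is needed.
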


\begin{proposition} \label{RPCompleteInvariant}
The set of generalized rack polynomials is a complete invariant of constant 
action racks.
\end{proposition}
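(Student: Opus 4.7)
The plan is to reduce the question to a classical fact about permutations. First I would observe that two constant action racks $(X,\sigma)$ and $(Y,\tau)$ on the same underlying set size $k$ are isomorphic as racks if and only if $\sigma$ and $\tau$ are conjugate in $S_k$: a bijection $\phi:X\to Y$ satisfies $\phi(x\tr y)=\phi(x)\tr \phi(y)$ for all $x,y$ precisely when $\phi\circ\sigma = \tau\circ\phi$, and conjugacy in $S_k$ is equivalent to equality of cycle type. So it suffices to show that the cycle type of $\sigma$ (together with $k$) can be recovered from the set $\{rp_{m,n}(X)\}_{m,n\geq 1}$.

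Next I would extract the relevant numerical data from the polynomials using Proposition \ref{GenRPofConstAct}. Each polynomial has the form
\[ rp_{m,n}(X) = b_m s^k t^{a_n} + (k-b_m)t^{a_n}, \]
where $a_n = |\mathrm{Fix}(\sigma^n)|$ and $b_m = |\mathrm{Fix}(\sigma^m)|$. Reading off the exponent of $s$ in any single polynomial gives $k$; reading the exponent of $t$ and the coefficient of $s^kt^{a_n}$ in $rp_{n,n}(X)$ gives $|\mathrm{Fix}(\sigma^n)|$ for every $n\geq 1$. Thus the set of generalized rack polynomials determines the function $n\mapsto |\mathrm{Fix}(\sigma^n)|$.

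Finally I would recover the cycle type from this function. Letting $c_d$ denote the number of $d$-cycles of $\sigma$, an element is fixed by $\sigma^n$ exactly when the length of its cycle divides $n$, so
\[ |\mathrm{Fix}(\sigma^n)| = \sum_{d\mid n} d\, c_d. \]
Möbius inversion then yields $d\, c_d = \sum_{e\mid d} \mu(d/e)\,|\mathrm{Fix}(\sigma^e)|$, so each $c_d$, and hence the cycle type of $\sigma$, is determined. Combined with the first paragraph, this gives the proposition. There is no serious obstacle here: the only content beyond the preceding proposition is the standard combinatorial observation that the fixed-point counts of all powers of a permutation determine its conjugacy class, and the main care needed is to keep the bookkeeping between $a_n$, $b_m$, and the cycle data straight.
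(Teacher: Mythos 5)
Your proposal is correct and takes essentially the same route as the paper: its Lemma \ref{Isom=Cycles} is your isomorphism-iff-conjugacy (i.e.\ same cycle structure) step, and its Lemma \ref{RP=Cycles} is your observation that the polynomials of Proposition \ref{GenRPofConstAct} encode $|\mathrm{Fix}(\sigma^n)|$ for all $n$, which determines the cycle type (the paper uses a minimal-length induction where you use M\"obius inversion). One small caution: when $b_m=0$ the $s^k$ term is absent, so you should recover $k$ as the sum of the coefficients (or from $m$ equal to the order of $\sigma$) rather than from the $s$-exponent of an arbitrary single polynomial.
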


To prove this result we will need a pair of lemmas.

\begin{lemma} \label{Isom=Cycles}
Suppose $X$ and $X'$ are constant action racks given by permutations 
$\sigma$ and $\sigma'$, respectively. Then $X$ and $X'$ are isomorphic if 
and only if $\sigma$ and $\sigma'$ have the same cycle structure.
\end{lemma}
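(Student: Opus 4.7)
The plan is to reduce this to the classical fact that two permutations of a finite set are conjugate in the symmetric group if and only if they have the same cycle structure. The key observation is that for a constant action rack, the operation $x\tr y = \sigma(x)$ depends only on the first argument, so the rack isomorphism condition collapses to a pure statement about the defining permutation.

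First I would unpack what a rack homomorphism $f\colon X\to X'$ means in this setting. Writing out $f(x\tr y) = f(x)\tr' f(y)$ with $\tr$ and $\tr'$ defined by $\sigma$ and $\sigma'$, the condition becomes $f(\sigma(x)) = \sigma'(f(x))$ for all $x$, independently of $y$. Thus a bijection $f$ is a rack isomorphism precisely when $f\sigma = \sigma' f$, i.e.\ when $\sigma' = f\sigma f^{-1}$.

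From here both directions are immediate. For the forward direction, if $X\cong X'$ via some bijection $f$, then $\sigma' = f\sigma f^{-1}$, so $\sigma$ and $\sigma'$ are conjugate in the symmetric group on $|X|=|X'|$ letters and therefore have the same cycle structure. For the converse, having the same cycle structure in particular forces $|X|=|X'|$, and by the standard fact about conjugacy classes in $S_n$ there exists a bijection $f\colon X\to X'$ with $f\sigma f^{-1}=\sigma'$; by the observation above this $f$ is a rack isomorphism.

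There is essentially no obstacle here beyond citing (or briefly recalling) the symmetric group conjugacy fact; the only thing to be slightly careful about is checking that the rack isomorphism condition really does reduce to the conjugation equation without any extra constraint coming from the $y$ variable, which it does precisely because constant action racks ignore their second input.
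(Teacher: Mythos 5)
Your proof is correct, but it follows a different route from the paper, most notably in the forward direction. You observe that for constant action racks the homomorphism condition $f(x\tr y)=f(x)\tr' f(y)$ collapses to the intertwining relation $f\sigma=\sigma' f$ (the $y$ variable genuinely plays no role), so isomorphism is literally conjugacy of $\sigma$ and $\sigma'$, and then you invoke the classical fact that conjugacy in the symmetric group is equivalent to having the same cycle structure. The paper instead proves the forward direction through its polynomial machinery: it uses the isomorphism invariance of the generalized rack polynomials $rp_{l,1}$ together with Proposition \ref{GenRPofConstAct} to deduce, by induction on $l$, that $\sigma$ and $\sigma'$ have the same number of cycles of each length. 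For the converse the two arguments essentially coincide: the paper explicitly writes $\sigma$ and $\sigma'$ in matching cycle notation, defines the cycle-matching bijection $\alpha$, and checks the homomorphism identity directly, which is exactly the standard construction hiding inside the conjugacy fact you cite. Your approach buys a cleaner and more self-contained argument: the lemma becomes pure permutation group theory, completely decoupled from the rack polynomials, which is aesthetically preferable since Lemma \ref{Isom=Cycles} is later combined with Lemma \ref{RP=Cycles} (which does use the polynomial formula) to yield Proposition \ref{RPCompleteInvariant}; keeping the two lemmas independent makes that combination logically tidier. What the paper's route buys is thematic consistency -- it showcases that the generalized polynomials $rp_{l,1}$ alone already detect cycle structure -- but as a proof of this particular statement your reduction to conjugation is at least as rigorous and arguably more transparent.
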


\begin{proof}
Suppose that $X$ and $X'$ are isomorphic. Then for any positive integer $l$,
\[
rp_{l,1}(X) = rp_{l,1}(X')
\]
and thus $X$ and $X'$ must have the same number of cycles whose length 
divides $l$. For $l=1$, this means that $X$ and $X'$ have the same number of 
cycles of length $1$. Proceeding inductively, $X$ and $X'$ have the same 
number of cycles of all positive integer lengths $l$, and so have the same 
cycle structure.

Conversely, suppose that $\sigma$ and $\sigma'$ have the same cycle structure, 
that is, we can write them as
\[
\sigma = (1_1 1_2 \ldots 1_{n_1})(2_1 2_2 \ldots 2_{n_2}) 
\ldots (k_1 k_2 \ldots k_{n_k}) \]
and
\[
\sigma' = (1_1' 1_2' \ldots 1_{n_1}')(2_1' 2_2' \ldots 2_{n_2}') 
\ldots (k_1' k_2' \ldots k_{n_k}').
\]
Let $\alpha: \sigma\to\sigma'$ be the bijection given by $a_i\mapsto a_i'$. 
Then
\[\alpha(a_i \tr b_j) = \alpha(\sigma(a_i)) = \alpha(a_{i+1}) 
= a_{i+1}' = \sigma'(a_i') = a_i' \tr b_j' 
= \alpha(a_i) \tr \alpha(b_j)
\]
and so $\alpha$ is a rack isomorphism from $X$ to $X'$.
\end{proof}

\begin{lemma} \label{RP=Cycles}
Suppose $X$ and $X'$ are constant action racks given by the permutations 
$\sigma$ and $\sigma'$, respectively. Then $rp_{m,n}(X) = rp_{m,n}(X')$ 
for all $m,n\in\mathbb{Z}^+$ if and only if $\sigma$ and $\sigma'$ have the 
same cycle structure.
\end{lemma}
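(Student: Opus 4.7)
The plan is to reduce both directions of the lemma to facts already established in the excerpt, then show that the family of polynomials $\{rp_{m,n}\}$ encodes exactly the multiset of cycle lengths.

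For the ``if'' direction I would simply invoke Lemma \ref{Isom=Cycles}: if $\sigma$ and $\sigma'$ have the same cycle structure, then $X\cong X'$ as racks, and rack polynomials are manifestly invariants of rack isomorphism (since a bijection $\alpha\colon X\to X'$ with $\alpha(x\tr y)=\alpha(x)\tr\alpha(y)$ identifies the sets $C_m(x)$ and $R_n(x)$ with $C_m(\alpha(x))$ and $R_n(\alpha(x))$), so $rp_{m,n}(X)=rp_{m,n}(X')$ for every $m,n\in\mathbb{Z}^+$.

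For the ``only if'' direction I would use Proposition \ref{GenRPofConstAct} to extract numerical data from each polynomial. Writing $rp_{m,n}(X)=b_m s^k t^{a_n}+(k-b_m)t^{a_n}$, we see that the $t$-exponent and the coefficient of $s^k$ let us read off $a_n$, the number of elements fixed by $\sigma^n$, and $b_m$, the number of elements fixed by $\sigma^m$. So equality of $rp_{m,1}(X)$ and $rp_{m,1}(X')$ for every $m$ forces $\sigma$ and $\sigma'$ to have the same number of fixed points under every power.

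The last step is the standard observation that the fixed-point counts of the powers of a permutation determine its cycle type: if $c_d$ denotes the number of $d$-cycles of $\sigma$, then the number of fixed points of $\sigma^m$ is $\sum_{d\mid m} d\,c_d$. Induction on $m$ (or Möbius inversion) recovers each $c_d$ from the sequence of these sums, so the cycle type of $\sigma$ is determined by the family $\{b_m\}$, and $\sigma$ and $\sigma'$ must have identical cycle structures. I do not expect a serious obstacle here; the only small subtlety is noticing that the $n$-variable is irrelevant for recovering the cycle structure, so it suffices to examine the polynomials with $n=1$ (or any fixed value).
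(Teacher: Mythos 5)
Your proof is correct, and the ``only if'' direction is essentially the paper's argument: read off from Proposition \ref{GenRPofConstAct} the fixed-point counts of $\sigma^m$ and $\sigma'^m$ (the paper phrases the recovery of the cycle type via a ``minimal $l$ where the cycle counts differ'' induction, while you phrase it via the identity $\lvert\mathrm{Fix}(\sigma^m)\rvert=\sum_{d\mid m}d\,c_d$ and M\"obius inversion --- the same combinatorial fact, and your version is arguably more explicit). Where you genuinely diverge is the ``if'' direction: the paper again argues directly from Proposition \ref{GenRPofConstAct}, noting that permutations with the same cycle structure have the same number of elements in cycles of length dividing $m$ (and dividing $n$), so both fixed-point equalities hold and the polynomials coincide; you instead route through Lemma \ref{Isom=Cycles} (same cycle structure $\Rightarrow$ isomorphic) together with the isomorphism-invariance of $rp_{m,n}$. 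That is legitimate and not circular, since the direction of Lemma \ref{Isom=Cycles} you invoke is proved by an explicit construction of the isomorphism, and isomorphism-invariance of the polynomials is the same fact the paper itself uses in the other direction of that lemma; the trade-off is that your version of Lemma \ref{RP=Cycles} now depends on Lemma \ref{Isom=Cycles}, whereas the paper keeps the two lemmas logically independent and only combines them in Proposition \ref{RPCompleteInvariant}. One small point worth making explicit in your write-up: to extract $a_n$, $b_m$, and $k$ from an equality of polynomials you should first note that the coefficients of $rp_{m,n}$ sum to the cardinality, so $k=k'$, after which the $t$-exponent and the coefficient of $s^kt^{a}$ determine $a_n$ and $b_m$ unambiguously (the paper glosses this with the phrase ``of the same cardinality'').
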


\begin{proof}
By proposition \ref{GenRPofConstAct}, if $X$ and $X'$ are constant action
racks of the 
same cardinality, 
then $rp_{m,n}(X) = rp_{m,n}(X')$ if and only if
\begin{align}
  \lvert \{ x_i\ |\ \sigma^m(x_i) = x_i\} \rvert 
= \lvert \{ x_i\ |\ \sigma'^m(x_i) = x_i\} \rvert \label{sms'm}
\end{align}
and
\begin{align}
  \lvert \{ x_i\ |\ \sigma^n(x_i) = x_i\} \rvert 
= \lvert \{ x_i\ |\ \sigma'^n(x_i) = x_i\} \rvert. \label{sns'n}
\end{align}

If $\sigma$ and $\sigma'$ have the same cycle structure, then they have the 
same number of elements in cycles whose length divides $m$ and the same 
number of elements in cycles whose length divides $n$. Thus, both 
\eqref{sms'm} and \eqref{sns'n} hold and so $rp_{m,n}(X) = rp_{m,n}(X')$.

If $\sigma$ and $\sigma'$ do not have the same cycle structure, then there 
exists some minimal $l$ such that $\sigma$ and $\sigma'$ do not have the 
same number of cycles of length $l$. Thus, for $m=l$ \eqref{sms'm} cannot 
hold, and so $rp_{m,n}(X) \neq rp_{m,n}(X')$.
\end{proof}

Combining lemmas \ref{Isom=Cycles} and \ref{RP=Cycles} immediately gives 
us proposition \ref{RPCompleteInvariant}. The following example shows that 
the generalized rack 
polynomials must be used to get a complete invariant on constant action 
racks; the $(1,1)$--rack polynomial is not sufficient. 

\begin{example} 
\textup{The racks with rack matrices}
\begin{align*}
M_X=
\begin{bmatrix}
2 & 2 & 2 & 2 & 2 & 2 \\
1 & 1 & 1 & 1 & 1 & 1 \\
4 & 4 & 4 & 4 & 4 & 4 \\
3 & 3 & 3 & 3 & 3 & 3 \\
6 & 6 & 6 & 6 & 6 & 6 \\
5 & 5 & 5 & 5 & 5 & 5 
\end{bmatrix}
\qquad\mathrm{and}\qquad
M_Y=\begin{bmatrix}
2 & 2 & 2 & 2 & 2 & 2 \\
3 & 3 & 3 & 3 & 3 & 3 \\
1 & 1 & 1 & 1 & 1 & 1 \\
5 & 5 & 5 & 5 & 5 & 5 \\
6 & 6 & 6 & 6 & 6 & 6 \\
4 & 4 & 4 & 4 & 4 & 4 
\end{bmatrix}
\end{align*}
\textup{both have no fixed points and so by corollary \ref{RPofConstAct} 
have the 
same rack polynomial $rp(X)=rp(Y)=6$, but have different cycle structures 
and so by lemma \ref{Isom=Cycles} are not isomorphic.}
\end{example}

\section{\large \textbf{Quandle Polynomials of Alexander Quandles}} 
\label{nsata}

In this section we study the quandle polynomials of Alexander quandles.

\begin{definition}\textup{
Let $Q$ be an Alexander quandle. Say that $x$ and $y$ are 
\textit{$(1-t)$-equivalent}, denoted $x\sim_{(1-t)} y$ or just $x\sim y$, if
$(1-t)x=(1-t)y$.}
\end{definition}

\begin{proposition}\label{Alex=>nsata}
Let $Q$ be an Alexander quandle such that $\lvert Q\rvert = n$. Then
\[
rp(Q) = ns^at^a
\]
for some positive integer $a\vert n$.
\end{proposition}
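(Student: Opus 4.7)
The plan is to compute $c_1(x)$ and $r_1(x)$ explicitly for each $x \in Q$ using the Alexander quandle formula $x \tr y = tx + (1-t)y$, and to observe that both counts are controlled by the same $(1-t)$-equivalence class.

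First I would unpack the two defining conditions. For $y \in Q$, the equation $y \tr x = y$ becomes $ty + (1-t)x = y$, i.e.\ $(1-t)y = (1-t)x$, which is exactly $y \sim_{(1-t)} x$. Similarly, $x \tr y = x$ becomes $tx + (1-t)y = x$, i.e.\ $(1-t)y = (1-t)x$, the \emph{same} condition. So $C_1(x) = R_1(x)$ equals the $(1-t)$-equivalence class $[x]$ of $x$, and $c_1(x) = r_1(x)$.

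Next I would note that, viewing $Q$ as a module, $y \sim_{(1-t)} x$ is equivalent to $(1-t)(y-x) = 0$, i.e.\ $y - x \in \ker(1-t)$. Thus every equivalence class is a coset of the subgroup $K := \ker(1-t)$ of the underlying abelian group $(Q,+)$, and so all classes have the same cardinality $a := |K|$. By Lagrange's theorem applied to this subgroup, $a \mid n$.

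Combining these observations, every $x \in Q$ contributes the monomial $s^a t^a$ to $rp(Q)$, yielding $rp(Q) = n s^a t^a$ with $a \mid n$, as desired. There is no real obstacle here: the only conceptual step is noticing that the two defining conditions collapse to the same linear equation $(1-t)(y-x)=0$, after which the proof is just the coset-counting argument.
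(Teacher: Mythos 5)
Your proof is correct and follows essentially the same route as the paper's: both reduce the conditions $y\tr x=y$ and $x\tr y=x$ to $(1-t)(y-x)=0$, identify the equivalence classes with cosets of the kernel of multiplication by $1-t$, and count $a=\lvert\ker(1-t)\rvert$ with $a\mid n$. The only cosmetic difference is that the paper packages this via the homomorphism $\varphi(q)=0\tr q$ and the first isomorphism theorem, while you invoke Lagrange's theorem directly on the subgroup $\ker(1-t)$ of $(Q,+)$.
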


\begin{proof}
Let $\varphi : Q\to Q$ be the function given by
\begin{align*}
\varphi(q) &= 0\tr q.
\end{align*}
Since
\[ 0\triangleright q = t\cdot 0 + (1-t)\cdot q = (1-t)\cdot q,\]
$\varphi$ is just left multiplication by $1-t$. Since Alexander quandles 
are abelian and thus left distributive, $\varphi$ is automatically a 
homomorphism. By the first isomorphism theorem $\varphi$ partitions $Q$ 
into cosets by the congruence 
\[q\sim p \iff \varphi(q)=\varphi(p).\] These cosets are all of size 
$a=\lvert \ker\varphi \rvert$; clearly $a\vert n$. We will show that
 $q\tr p = q$ if and only if $q\sim p$.

Suppose $q\tr p = q$. Then
\begin{align*}
tq + (1-t)p &= q \\
(t-1)q + (1-t)p &= 0 \\
(1-t)(p-q) &= 0
\end{align*}
so $p-q\in\ker\varphi$ and so $q\sim p$.

Suppose $q\sim p$. Then $0 = \varphi(p-q)$ implies
\[
0 = \varphi_0(p-q) = (1-t)(p-q) =(t-1)q + (1-t)p 
\]
and hence
\[q = tq + (1-t)p = q\triangleright p.\]

Therefore, for any $q\in Q$ there are exactly $a$ choices of $p$ such 
that $q\tr p = q$ and also exactly $a$ choices of $p$ such that 
$p\tr q = p$. Thus, $r(q)=c(q)=a$ for all $q\in Q$, and so
\[
rp(Q) = ns^at^a.
\]
\end{proof}

Note that Proposition \ref{Alex=>nsata} provides a second, equivalent 
definition for $(1-t)$-equivalence.
\begin{corollary}\label{cor7}
An equivalent definition of $\sim$ is $p\sim q$ if and only if $p\tr q=p$.
\end{corollary}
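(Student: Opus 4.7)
The plan is to observe that this corollary is essentially an immediate unpacking of a biconditional already established inside the proof of Proposition \ref{Alex=>nsata}; no new work is required beyond relabeling. Specifically, the two "Suppose" paragraphs in that proof together show that for any $q,p\in Q$ one has $q\tr p = q$ if and only if $q\sim p$, which is exactly what the corollary asserts (with the roles of $p$ and $q$ swapped in the notation).

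To make this self-contained, I would briefly redo the algebra. Starting from the definition, $p\sim q$ means $(1-t)p = (1-t)q$, i.e.\ $(1-t)(p-q) = 0$. On the other hand, using the Alexander quandle operation,
\[
p\tr q = tp + (1-t)q = p \iff (1-t)q - (1-t)p = 0 \iff (1-t)(q-p) = 0.
\]
Since $(1-t)(p-q)=0$ and $(1-t)(q-p)=0$ describe the same condition, the two characterizations of $\sim$ coincide.

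There is no genuine obstacle here; the only thing to be careful about is bookkeeping the sign and making sure the equivalence is stated with the same pair of variables as in the corollary. I would simply cite the relevant computation in the proof of Proposition \ref{Alex=>nsata} and conclude.
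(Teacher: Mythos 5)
Your proposal is correct and matches the paper's treatment: the paper presents this corollary as an immediate consequence of the biconditional $q\tr p = q \iff q\sim p$ established inside the proof of Proposition \ref{Alex=>nsata}, with no further argument needed. Your supplementary algebra $(1-t)(p-q)=0 \iff (1-t)(q-p)=0$ is a harmless and accurate way of making the relabeling explicit.
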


This also gives us another proof of the fact noted in \cite{AG} that
\begin{corollary}
All Alexander quandles are crossed sets. 
\end{corollary}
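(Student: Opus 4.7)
The plan is to invoke Corollary \ref{cor7} twice, once for each direction of the crossed set biconditional. Corollary \ref{cor7} says that in an Alexander quandle $Q$ we have $p \tr q = p$ if and only if $p \sim q$, where $\sim$ is $(1-t)$-equivalence. Since the crossed set condition is $x \tr y = x \iff y \tr x = y$, both sides of this biconditional can be rewritten via the corollary, reducing the task to showing that $x \sim y \iff y \sim x$.

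The relation $\sim$ was defined by the equation $(1-t)x = (1-t)y$, which is manifestly symmetric in $x$ and $y$ because equality is symmetric. So the reduced statement is immediate, and chaining the equivalences gives $x \tr y = x \iff x \sim y \iff y \sim x \iff y \tr x = y$, which is exactly the crossed set axiom.

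There is no real obstacle here: the substantive content was already extracted in the proof of Proposition \ref{Alex=>nsata}, where the characterization of the relation $x \tr y = x$ via $\ker(1-t)$ was established. Once that characterization is packaged as Corollary \ref{cor7}, the crossed set property reduces to the trivial symmetry of an equality. If one preferred to bypass the corollary, the same result can be obtained by a direct one-line calculation: $x \tr y = x$ rearranges to $(1-t)x = (1-t)y$, which is symmetric in $x$ and $y$ and hence equivalent to $y \tr x = y$. I would keep the proof in the corollary-based form, since it makes explicit the role of the earlier structural analysis.
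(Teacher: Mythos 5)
Your proof is correct and follows essentially the same route as the paper, which also deduces the crossed set property immediately from Corollary \ref{cor7} together with properties of the relation $\sim$. If anything, your version is slightly more careful: the paper's one-line proof cites the \emph{reflexivity} of equivalence relations, whereas the property actually needed (and which you correctly use) is the \emph{symmetry} of $\sim$, immediate from the definition $(1-t)x=(1-t)y$.
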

\begin{proof}
This follows immediately from corollary \ref{cor7} and the 
reflexivity of equivalence relations.
\end{proof}

These results suggest the possibility of defining $\sim$ for all crossed 
sets, or proving the converse of \ref{Alex=>nsata}. However, there is a 
counterexample to both of these natural conjectures.
\begin{example} \label{CE}
\textup{Let $Q$ be the crossed set with matrix}
\begin{align*}
M_Q = \begin{bmatrix}
1 & 3 & 2 & 1 & 1 & 1 \\
3 & 2 & 1 & 2 & 2 & 2 \\
2 & 1 & 3 & 3 & 3 & 3 \\
4 & 4 & 4 & 4 & 6 & 5 \\
5 & 5 & 5 & 6 & 5 & 4 \\
6 & 6 & 6 & 5 & 4 & 6
\end{bmatrix}.
\end{align*}
\textup{
Suppose, as for Alexander quandles and $ns^at^a$ quandles, we define $\sim$ 
by $p\sim q$ when $p\tr q = p$. Then for $Q$ we have $1\sim 4$ and $4\sim 2$, 
but $1 \not\sim 2$. Therefore, $\sim$ cannot be an equivalence relation for 
this crossed set.}

\textup{Furthermore, 
\begin{align*}
(1 \tr 1) \tr (4 \tr 2) = 1 \tr 4 = 1,
\end{align*}
but
\begin{align*}
(1 \tr 4) \tr (1 \tr 2) = 1 \tr 3 = 2
\end{align*}
and so $Q$ is not abelian and thus not Alexander, disproving the converse 
of proposition \ref{Alex=>nsata}.}
\end{example}
This also provides a counterexample to the conjecture in \cite{N2} that 
distinct non-Latin quandles are distinguished by at least one of their 
generalized rack polynomials. 
\begin{example}
\textup{Let $R$ be the quandle with quandle matrix}
\begin{align*}
M_R = \begin{bmatrix}
1 & 1 & 2 & 2 & 1 & 1 \\
2 & 2 & 1 & 1 & 2 & 2 \\
3 & 3 & 3 & 3 & 4 & 4 \\
4 & 4 & 4 & 4 & 3 & 3 \\
6 & 6 & 5 & 5 & 5 & 5 \\
5 & 5 & 6 & 6 & 6 & 6
\end{bmatrix}.
\end{align*}
\textup{Both $R$ and $Q$ from example \ref{CE} have the generalized rack 
polynomial}
\[
rp_{(m,n)}(R) =rp_{(m,n)}(Q) = 6s^ct^d
\]
\textup{with}
\begin{align*}
c = \begin{cases}
6 & \mathrm{when}\ $n$\ \mathrm{is\ even,} \\
4 & \mathrm{when}\ $n$\ \mathrm{is\ odd,}
\end{cases}
\qquad\mathrm{and}\qquad
d = \begin{cases}
6 & \mathrm{when}\ $m$\ \mathrm{is\ even,} \\
4 & \mathrm{when}\ $m$\ \mathrm{is\ odd.}
\end{cases}
\end{align*}
\textup{However, $R$ is abelian and $Q$ is not abelian, so $Q$ and $R$ are not 
isomorphic.}
\end{example}

\section{\large \textbf{Rack polynomial enhanced link invariants}}\label{rpinv}

In \cite{N3}, the quandle counting invariant $|\mathrm{Hom}(Q(L),T)|$ was
extended to the case of finite non-quandle racks. In this section we will
enhance this invariant with rack polynomials. We begin by recalling how this
was done in the quandle case.

\begin{definition}
\textup{Let $S$ be a subrack $S\in X$. The $(m,n)$--\textit{subrack polynomial}
is} \[srp^{m,n}_{S\subset X}(s,t)=\sum_{x\in S} s^{c_m(x)}t^{r_n(x)}.\] 
\end{definition}

The subquandle polynomials of the image subquandles in $\mathrm{Hom}(Q(L),T)$ 
are used to enhance the quandle counting invariants in \cite{N,N2}. 
Specifically, instead of counting $1$ for each element of 
$\mathrm{Hom}(Q(L),T)$ to obtain the quandle counting invariant
$|\mathrm{Hom}(Q(L),T)|$, we count $sr_{\mathrm{Im}(f)\subset T}(s,t)$
to obtain a multiset of subquandle polynomials. We can express these multisets
in a polynomial-style form by writing the elements of the multiset as
powers of a variable $z$ and the multiplicities as coefficients.

\begin{definition}
Let $L$ be a link and $T$ a finite quandle. The $(m,n)$--\textit{subquandle 
polynomial invariant} of $L$ with respect to $T$ is then
\[sp_{m,n}(L,T)=\sum_{f\in \mathrm{Hom}(Q(L),T)} z^{srp^{m,n}_{\mathrm{Im}(f)\subset T}(s,t)}.
\]
\end{definition}

Now, let $L$ be an oriented link with $c$ ordered components. For any diagram 
$D$ of $L$, we can regard $D$ as a framed link using the blackboard framing, 
i.e. giving each component of $L$ a framing number $w_i$ equal to its 
self-writhe. Thus, such a diagram has a framing vector 
$\mathbf{w}=(w_1,\dots,w_c)\in \mathbb{Z}^c.$

For any finite rack $T$, let $N(T)$ be the \textit{rack rank} of $T$,
i.e. the exponent of the permutation in $S_{|T|}$ along the diagonal of
the rack matrix of $T$. If two ambient isotopic diagrams of $D$
have writhe vectors which are componentwise congruent modulo $N(T)$,
then there is a bijection 
\[\phi: \mathrm{Hom}(FR(D,\mathbf{w}),T)\to\mathrm{Hom}(FR(D,\mathbf{w'}),T)\]
between the sets of rack homomorphisms from the fundamental racks of 
$(D,\mathbf{w})$ and $(D,\mathbf{w'})$ into $T$ defined by sending a coloring
of one diagram to a coloring of the same diagram with $mN$ kinks added. 
Indeed, since any subrack containing an element
$x\in T$ must also contain the rack powers $x^{\tr n}$ for all 
$n\in \mathbb{Z}$ (see \cite{N3}), $\phi$ preserves image subracks.
Hence, as far as $T$ is concerned, the framing
vectors of $D$ live in $W=(\mathbb{Z}_{N(T)})^c$, and we have an invariant
of unframed links given by
\[SR(L,T)=\left|\{ f\in \mathrm{Hom}(FR(D,\mathbf{w}),T) \ |
\ \mathbf{w}\in W\}\right|,\]
called the \textit{simple rack counting invariant}. A refinement obtained
by keeping track of which colorings belong to which framings is the 
\textit{rack counting polynomial} invariant
\[PR(L,T)=\sum_{\mathbf{w}\in W} \left(
|\mathrm{Hom}(FR(D,\mathbf{w}),T)|\prod_{i=1}^cq_i^{w_i}\right).\]
When $T$ is a quandle, $N(T)=1$ and we have 
$SR(L,T)=|PR(L,T)|=|\mathrm{Hom}(Q(L),T)|$. See
\cite{N3} for more.

We would like to jazz up these rack counting invariants with the generalized
rack polynomials. To this end, we propose the following

\begin{definition}
\textup{Let $L$ be a link of $c$ components, $T$ a finite rack with rack
rank $N(T)$, and $W=(\mathbb{Z}_{N(T)})^c$. Then the $(m,n)$--\textit{simple 
subrack polynomial enhanced rack counting multiset} is the multiset}
\[srpm_{m,n}(L,T)=\left\{ \left. srp^{m,n}_{\mathrm{Im}(f)\subset T}(s,t)
\ \right| \ \mathbf{w}\in W,\ f\in \mathrm{Hom}(FR(D,\mathbf{w}),T)
\right\}\]
\textup{and the $(m,n)$--\textit{subrack 
polynomial enhanced rack counting multiset} is the multiset of ordered pairs}
\[rpm_{m,n}(L,T)=\left\{ \left.
\left(srp^{m,n}_{\mathrm{Im}(f)\subset T}(s,t),
\prod_{i=1}^c q_i^{w_i}\right) \ \right| 
\ \mathbf{w}\in W,\
f\in \mathrm{Hom}(FR(D,\mathbf{w}),T)
\right\}\]
\textup{We can also define the invariants in a more polynomial-style form
for ease of comparison:}
\[srpp_{m,n}(L,T)=\sum_{\mathbf{w}\in W}
\left(\sum_{\ f\in \mathrm{Hom}(FR(D,\mathbf{w}),T)}
z^{srp^{m,n}_{\mathrm{Im}(f)\subset T}(s,t)}\right)\]
\textup{and}
\[rpp_{m,n}(L,T)=
\sum_{\mathbf{w}\in W}\left(\sum_{\ f\in \mathrm{Hom}(FR(D,\mathbf{w}),T)}
\prod_{i=1}^{c} q_i^{w_i}z^{srp^{m,n}_{\mathrm{Im}(f)\subset T}(s,t)}\right).\]
\end{definition}

Specializing $s=t=0$ (or, indeed, $z=1$) in the subrack polynomial $rpp$ 
yields the rack counting
polynomial. Since every finite quandle $T$ is a rack, the fact that 
subquandle polynomial invariants are stronger than unenhanced quandle counting 
invariants means \textit{a fortiori} that subrack polynomial invariants
are stronger than unenhanced rack counting invariants. The next example shows
how subrack enhancement gives more information about a knot or link than
the unadorned rack counting invariant.

\begin{example}
\textup{The trefoil knot $3_1$ has simple rack counting invariant value 
$20$ with respect to the rack with rack matrix below.}
\[\begin{array}{cc}
\includegraphics{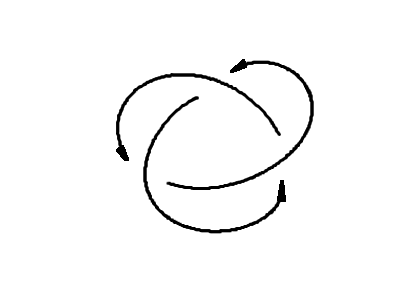} & \includegraphics{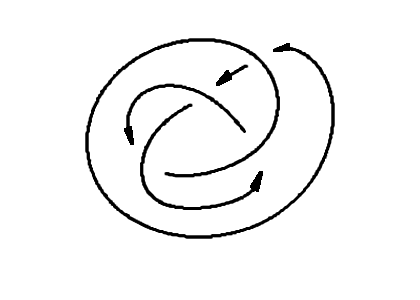} \\
\mathrm{odd \ writhe} & \mathrm{even \ writhe}
\end{array}\quad \quad
M_T=\left[\begin{array}{ccccc} 
1 & 3 & 2 & 1 & 1 \\
3 & 2 & 1 & 2 & 2 \\
2 & 1 & 3 & 3 & 3 \\
4 & 4 & 4 & 5 & 5 \\
5 & 5 & 5 & 4 & 4
\end{array}\right]
\]
\textup{Here $N(T)=2$, so we need only consider two diagrams of $3_1$, one
with even writhe and one with odd writhe.
The rack counting polynomial here is $11+9q$, which says that
11 colorings are contributed from the even-writhe diagram and 9 are 
contributed by the odd-writhe diagram.
The subrack polynomial invariant is $rpp(3_1,T)=2z^{2s^3t^3} + 3z^{s^3t^3} 
+ 6z^{3s^3t^3} + 3qz^{s^3t^3} + 6qz^{3s^3t^3}$, which further filters the 
contributions -- of the nine colorings of the odd-writhe diagram, six use 
colors in the subrack with subrack polynomial $3s^3t^3$ (in this case, the 
subquandle $\{1,2,3\}$) while three have colors in subracks with subrack 
polynomial $s^3t^3$ (here, the singleton subquandles $\{1\},\{2\},\{3\}$).
Similarly, the even-writhe diagram has colorings corresponding to the
odd-writhe colorings as expected, but additionally has two colorings by
the subrack $\{4,5\}.$}
\end{example}

Indeed, the example suggests the following

\begin{proposition}
If $S\subset T$ is a quandle and $K$ a knot, then the contributions to 
$rpp(K,T)$ from $S$ are equal for all powers of $q$. That is, $rpp$ 
includes the term
\[\left(\sum_{f\in \mathrm{Hom}(Q(K),S)}z^{sp_{\mathrm{Im}(f)\subset T}}\right)
(1+q+\dots+q^{N(T)-1}).\]
\end{proposition}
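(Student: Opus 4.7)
The plan is to show that, for each fixed framing $w \in \mathbb{Z}_{N(T)}$ of the (single) component of $K$, the $T$-colorings of $(D,w)$ whose image lies in $S$ are naturally in bijection with the unframed quandle homomorphisms $\mathrm{Hom}(Q(K),S)$, independently of $w$. Once that is established, the claimed geometric-series factor $1+q+\cdots+q^{N(T)-1}$ drops out immediately by summing over $w\in\mathbb{Z}_{N(T)}$.

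First I would unwind the definition of $rpp_{m,n}(K,T)$ and split the outer sum according to which image subrack $\mathrm{Im}(f)$ is being hit; the contributions from colorings whose image lies in $S$ are exactly what we need to isolate. For a fixed $w$, such a coloring is a rack homomorphism $f\colon FR(D,w)\to T$ factoring through the inclusion $S\hookrightarrow T$. Because $S$ is a quandle subrack, every $x\in S$ satisfies $x\tr x=x$, and hence $x\tr^n x = x$ for all $n\in\mathbb{Z}$. The only way the framing $w$ enters the relations of $FR(D,w)$ (beyond those of the underlying unframed diagram) is through the longitude, which forces $f(\mu)\tr^{w}f(\mu)=f(\mu)$ on the meridian $\mu$ of the component. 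When $f(\mu)\in S$ this relation is automatic, so $f$ is completely determined by -- and freely extends to -- a quandle homomorphism $Q(K)\to S$. This produces the desired bijection for every $w$, and shows in particular that the image subset $\mathrm{Im}(f)\subseteq T$, and hence the subrack polynomial $srp^{m,n}_{\mathrm{Im}(f)\subset T}(s,t)$, is the same for every framing $w$.

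Given framing-independence, the contribution to $rpp_{m,n}(K,T)$ coming from $S$-valued colorings at writhe $w$ is
\[q^{w}\sum_{f\in\mathrm{Hom}(Q(K),S)} z^{srp^{m,n}_{\mathrm{Im}(f)\subset T}(s,t)},\]
where the summand no longer depends on $w$. Summing over $w=0,1,\dots,N(T)-1$ pulls the $S$-dependent sum out as a common factor and leaves exactly $1+q+\cdots+q^{N(T)-1}$, which is the stated formula (using $sp$ as the abbreviation for the subrack polynomial that appears in the proposition).

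The main obstacle is making the bijection between $\{f\colon FR(D,w)\to T : \mathrm{Im}(f)\subseteq S\}$ and $\mathrm{Hom}(Q(K),S)$ completely watertight: one must check that the only framing-sensitive relation in the fundamental rack of a framed knot is the longitudinal one, and that the quandle identity $x\tr x=x$ (together with its iterates) neutralizes this relation for every value of $w$. This is essentially the same mechanism that already underlies the fact $N(T)=1$ when $T$ is a quandle, cited from \cite{N3}, but applied relatively to the subrack $S$ rather than to $T$ itself; once that point is handled carefully, the rest of the argument is purely bookkeeping.
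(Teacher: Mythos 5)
Your argument is correct and is essentially the paper's proof in expanded form: the paper simply asserts that quandle colorings do not depend on framing, which is exactly the framing-independent bijection with $\mathrm{Hom}(Q(K),S)$ that you establish via the quandle identity $x\tr x=x$ neutralizing the kink/framing relations, after which summing over $\mathbf{w}\in\mathbb{Z}_{N(T)}$ yields the factor $1+q+\dots+q^{N(T)-1}$. Your write-up just makes explicit the mechanism the paper leaves implicit.
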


\begin{proof}
Quandle colorings do not depend on framing, so we get the same contribution,
namely
\[\sum_{f\in \mathrm{Hom}(Q(K),S)}z^{sp_{\mathrm{Im}(f)\subset T}},\]
from each framing.
\end{proof}

\section{\large \textbf{Questions}}\label{quest}

We have shown that some classes of racks are classified by their generalized
rack polynomials (the constant action racks) while others are not
(quandles with polynomial $ns^at^a$). What conditions are sufficient for a 
type of rack to be determined by its generalized rack polynomials?

Since $ns^at^a$ quandles are not determined by their generalized quandle 
polynomials, what extra information is necessary to determine these quandles 
up to isomorphism? How can such extra information be incorporated into the 
enhanced rack counting invariants?

For every rack $R$, the quotient rack under operator equivalence 
($x\sim y \iff z\tr x = z\tr y \ \forall z$) is a quandle. What is the 
relationship between the subrack polynomial invariant with respect to $R$ 
and the subquandle polynomial invariant with respect to $Q=R/\sim$?

\bigskip

\textsc{Department of Mathematics, Pomona College,
 610 N. College Ave, Claremont, CA 91711}

\noindent
\textit{Email address: }\texttt{tnc02005@mymail.pomona.edu}

\bigskip

\textsc{Department of Mathematics, Claremont McKenna College,
 850 Colubmia Ave., Claremont, CA 91711}

\noindent
\textit{Email address: }\texttt{knots@esotericka.org}

\end{document}